\documentclass{article}
\usepackage{fix-cm}
\newcommand{\Helia}[1]{\textcolor{purple}{Helia: #1}}
\newcommand{\Cam}[1]{\textcolor{red}{Cameron: #1}}

\usepackage{graphicx} 

\usepackage{comment}
\usepackage{wrapfig} 
\usepackage{xcolor}
\usepackage{xspace}
\usepackage{graphicx}
\usepackage{subcaption}
\usepackage[T1]{fontenc}
\usepackage{lmodern}
\usepackage{amsthm} 
\usepackage{algorithm}
\usepackage{algpseudocode}
\usepackage[utf8]{inputenc}
\usepackage{parskip}
\usepackage{verbatim}
\usepackage{tikz}

\theoremstyle{plain}
\newtheorem{theorem}{Theorem}

\newtheorem{lemma}[theorem]{Lemma}

\theoremstyle{definition}

\usepackage{hyperref}
\usepackage{amsmath, amsfonts, amssymb}
\usepackage{cleveref}
\begin{document}

\title{A Combinatorial Proof of Cayley’s Formula via Degree Sequences}
\markright{Abbreviated Article Title}
\author{
Helia Karisani \quad \quad Mohammadreza Daneshvaramoli \\
\makebox[\textwidth]{\normalfont University of Massachusetts Amherst} \\
\makebox[\textwidth]{\normalfont \{hkarisani, mdaneshvaram\}@umass.edu}
}
\maketitle
\begin{abstract}
Cayley’s formula is a fundamental result in combinatorics, counting the number of a labeled tree with certain number of nodes. While existing proofs employ approaches such as Prüfer sequences and the Matrix-Tree Theorem, we introduce a novel combinatorial proof that provides an intuitive perspective on tree enumeration, shedding light on the relationships between degree sequences and the structural properties of labeled trees. We aim to make this proof accessible and insightful, offering potential applications to related combinatorial problems.
\end{abstract}
\vspace{-1em}
\noindent\textbf{Keywords:} Cayley’s formula, degree sequence, labeled trees, combinatorics
\section{Introduction}
Cayley’s formula is a foundational result in combinatorics, stating that the number of labeled trees on \(n\) vertices is \(n^{n-2}\). First conjectured by Borchardt in 1860~\cite{Borchardt1860} and later proved by Cayley in 1889~\cite{Cayley1889}, the theorem has inspired numerous elegant proofs, including those based on Prüfer sequences~\cite{prufer1918}, the Matrix-Tree Theorem, and various bijective constructions~\cite{aigner2010proofs}
\section{Cayley’s formula Proof}
\label{sec:cayleys_proof}
We begin with a crucial theorem on counting trees with specified degrees, followed by our proof of Cayley’s formula.
\begin{theorem}
	\label{th:d1}
	Suppose $n \geq 2$ and $d_1, d_2, \dots, d_n$ are positive natural numbers with a sum of $2n-2$. In this case, the number of trees with vertices $\{1, 2, \dots, n\}$ and with vertex degrees $d_1, d_2, \dots, d_n$ is given by:
	\begin{equation}
		T_{n,d_1,d_2,\dots,d_n} = \frac{(n - 2)!}{(d_1-1)!(d_2-1)!\dots(d_n-1)!}.
	\end{equation}
\end{theorem}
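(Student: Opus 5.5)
We argue by induction on $n$, using the standard leaf-deletion recursion. For $n=2$ the only admissible sequence is $d_1=d_2=1$. There is exactly one tree, the single edge, and the formula gives $0!/(0!\,0!)=1$, so the base case holds. Now fix $n\ge 3$ and suppose the statement holds for $n-1$ vertices.

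First, since each $d_i\ge 1$ and $\sum d_i = 2n-2 < 2n$, some $d_i$ must equal $1$. Relabeling changes neither the count nor the right-hand side, because the formula is symmetric in the $d_i$. So we may assume $d_n=1$, meaning vertex $n$ is a leaf in every tree with this degree sequence.

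Next we classify trees with degrees $d_1,\dots,d_n$ by the unique neighbour $j\in\{1,\dots,n-1\}$ of vertex $n$. Deleting vertex $n$ gives a bijection between such trees whose leaf $n$ hangs on $j$ and trees on $\{1,\dots,n-1\}$ with degree sequence $d_1,\dots,d_j-1,\dots,d_{n-1}$. The inverse map reattaches $n$ to $j$. This new sequence sums to $2(n-1)-2$, so it has the right total. However, it is admissible for the induction hypothesis only when $d_j\ge 2$. When $d_j=1$ there is no such tree, since $n\ge 3$ forces the tree to be connected beyond the edge $jn$. This is handled by noting that the factor $(d_j-1)$ appearing below vanishes. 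Hence
\[
T_{n,d_1,\dots,d_n}=\sum_{j=1}^{n-1} T_{n-1,d_1,\dots,d_j-1,\dots,d_{n-1}},
\]
where terms with $d_j=1$ are interpreted as $0$.

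Finally we apply induction to each term with $d_j\ge2$. Such a term equals
\[
\frac{(n-3)!}{\prod_{i\ne j,\,i<n}(d_i-1)!\,(d_j-2)!}=\frac{(n-3)!\,(d_j-1)}{\prod_{i=1}^{n}(d_i-1)!},
\]
where we use $(d_n-1)!=0!=1$. This expression also gives $0$ when $d_j=1$, so the formula covers every $j$. Summing over $j$ and using $\sum_{j=1}^{n-1}(d_j-1)=(2n-2-1)-(n-1)=n-2$ gives $(n-2)!/\prod(d_i-1)!$, as claimed.

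The only delicate points are the existence of a leaf and the bookkeeping of the degenerate case $d_j=1$. Beyond those, the argument is routine.
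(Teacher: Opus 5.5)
Your proposal is correct, and it differs from the paper mainly in that the paper gives no argument of its own for this theorem: it only cites Joyal and Andr\'e. Your leaf-deletion induction checks out at every step. A leaf exists because $\sum d_i = 2n-2 < 2n$. Deleting leaf $n$ and recording its neighbour $j$ gives the claimed bijection. The $d_j = 1$ terms vanish, both combinatorially (for $n \ge 3$) and through the factor $d_j - 1$. The final count $\sum_{j=1}^{n-1}(d_j-1) = n-2$ yields exactly $(n-2)!/\prod_i (d_i-1)!$.

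So rather than a competing route, your argument is a self-contained replacement for an external reference. The citation keeps the paper short but leaves its key input unproved in the text. Your induction uses only the degree-sum count and the fact that removing a leaf from a tree leaves a tree. The classical bijective alternative is via Pr\"ufer sequences: vertex $i$ occurs exactly $d_i - 1$ times, so the count is visibly the multinomial coefficient $\binom{n-2}{d_1-1,\ldots,d_n-1}$.

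One consequence is worth noticing. Once this theorem is established, Cayley's formula follows in one line by summing over all admissible degree sequences and applying the multinomial theorem with $c_i = d_i - 1 \ge 0$:
\[
\sum_{c_1+\cdots+c_n = n-2} \frac{(n-2)!}{c_1!\cdots c_n!} = (1+\cdots+1)^{n-2} = n^{n-2}.
\]
This is the same expansion the paper uses to evaluate $L_3$. With your proof of this theorem in hand, the paper's subsequent induction is not needed to obtain $n^{n-2}$.
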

\begin{proof}
For the proof, please refer to \cite{joyal1981}, by Joyal and André.
\end{proof}	
Below, Cayley's formula is stated along with our proof.
\begin{theorem} 
	\label{th:ca1}
	\textbf{Cayley's Formula:} The number of trees that can be formed by $n$ labeled vertices is $n^{n-2}$. 
\end{theorem}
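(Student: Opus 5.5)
The plan is to sum the count of Theorem~\ref{th:d1} over all admissible degree sequences and recognize the result as a multinomial expansion. Every tree on $\{1,\dots,n\}$ with $n\ge 2$ has a unique degree sequence $(d_1,\dots,d_n)$. Each $d_i\ge 1$, since the tree is connected with at least two vertices, and $\sum_i d_i = 2(n-1)=2n-2$ by the handshake lemma, because a tree has $n-1$ edges. So the set of labeled trees is the disjoint union, over all such sequences, of the trees with that degree sequence, and
\[
\#\{\text{trees on } [n]\} \;=\; \sum_{\substack{d_1,\dots,d_n\ge 1\\ d_1+\cdots+d_n=2n-2}} \frac{(n-2)!}{(d_1-1)!\cdots(d_n-1)!}.
\]
The case $n=1$ is trivial: there is one tree and $1^{-1}=1$. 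I treat it separately, since Theorem~\ref{th:d1} requires $n\ge 2$.

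Next, substitute $k_i=d_i-1$. Then $k_i\ge 0$ and $\sum_i k_i = 2n-2-n = n-2$. This map is a bijection between the admissible degree sequences and the weak compositions of $n-2$ into $n$ parts. The sum becomes
\[
\sum_{\substack{k_1,\dots,k_n\ge 0\\ k_1+\cdots+k_n=n-2}} \binom{n-2}{k_1,\dots,k_n}.
\]

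Finally, apply the multinomial theorem $(x_1+\cdots+x_n)^{n-2}=\sum \binom{n-2}{k_1,\dots,k_n}x_1^{k_1}\cdots x_n^{k_n}$ at $x_1=\cdots=x_n=1$, which gives $n^{n-2}$. For a purely combinatorial phrasing, the multinomial coefficient counts words of length $n-2$ over the alphabet $[n]$ in which letter $i$ appears exactly $k_i$ times. Summing over all $(k_i)$ therefore counts all $n^{n-2}$ words, with each word counted once according to its letter multiplicities.

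No step is genuinely hard once Theorem~\ref{th:d1} is assumed. The one point needing care is the first step: checking that the admissible degree sequences are exactly those with all $d_i\ge1$ and sum $2n-2$, so that the decomposition by degree sequence is a true partition with no sequences omitted and none spurious. Sequences that satisfy the constraints are covered by Theorem~\ref{th:d1}, and any that happen to admit no tree would simply contribute zero. The remaining algebra is the standard index shift followed by the multinomial identity.
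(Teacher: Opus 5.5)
Your proof is correct, but it takes a different and much shorter route than the paper.

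The paper argues by strong induction on $n$. It classifies trees by $k=\deg(v_1)$ and deletes vertex $1$ to obtain $k$ subtrees. In Lemma~\ref{lem:a1} it then double counts trees on $n-1$ vertices with $k-1$ marked edges, to show that exactly $\binom{n-2}{k-1}T_{n-1}/(n-1)^{k-2}$ trees have $\deg(v_1)=k$. Theorem~\ref{th:d1} enters only through the auxiliary tree on the $k$ ``super vertices'', where it is used to read $m^{k-2}\prod_i a_i$ as a weighted count of ways to join $k$ components. The induction hypothesis and the binomial theorem then finish.

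You instead apply Theorem~\ref{th:d1} once, to the full vertex set $[n]$, and collapse the sum over degree sequences with the multinomial theorem at $x_1=\cdots=x_n=1$. The paper already assumes Theorem~\ref{th:d1} for every number of vertices. Your argument therefore shows that Cayley's formula is an immediate corollary of it, so the paper's induction and super-vertex double count are not logically needed.

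Your version also handles the boundary cases cleanly. You treat $n=1$ explicitly, whereas the paper's sums silently use $T_1=1$, and its multinomial interpretation of $m^{k-2}$ breaks down at $k=1$.

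What the paper's route buys is a refined count by the degree of a fixed vertex, together with a combinatorial picture of it via forests. Your method yields that refinement just as directly. Fixing $d_1=k$ and summing the remaining multinomial coefficients gives $\binom{n-2}{k-1}(n-1)^{n-1-k}$ trees with $\deg(1)=k$. This agrees with the paper's expression once $T_{n-1}=(n-1)^{n-3}$.
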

\begin{proof}
To prove Theorem \ref{th:ca1}, we use induction on the theorem statement.\smallskip

\noindent \textbf{Base Case:} For $n = 2$, exactly one tree can be formed, i.e., $T_1 = 2^0 = 1$.
	
\noindent \textbf{Inductive Hypothesis:} Now assume the theorem holds for all values $1, 2, \dots, n-1$. 
    \begin{align*}
	\forall i \in \{2, \dots, n-1\} :T_i = i^{i-2}.
    \end{align*}	
\noindent\textbf{Inductive Step:} We want to prove the statement for $n$: $T_n = n^{n-2}$.	
    We begin by analyzing the role of the first vertex, $v_1$, whose degree $\deg(v_1) = k$ for $k \in\{ 1,\ldots,n-1\}$. This allows us to categorize trees based on the degree of $v_1$, leading to the expression:
	\begin{equation}
		\label{eq:e16}
		T_n = \sum_{k=1}^{n-1} \#\text{labeled trees where } \deg(v_1) = k
	\end{equation}
    Removing vertex~1 splits the graph into \( k \) connected subtrees with sizes \( a_1, \ldots, a_k \), satisfying \( \sum_{i=1}^k a_i = n-1 \), as shown in Figure~\ref{fig:f1}. 
\begin{figure}[htbp]
\centering
\begin{minipage}[t]{0.48\textwidth}
    \centering
    \includegraphics[width=0.95\linewidth]{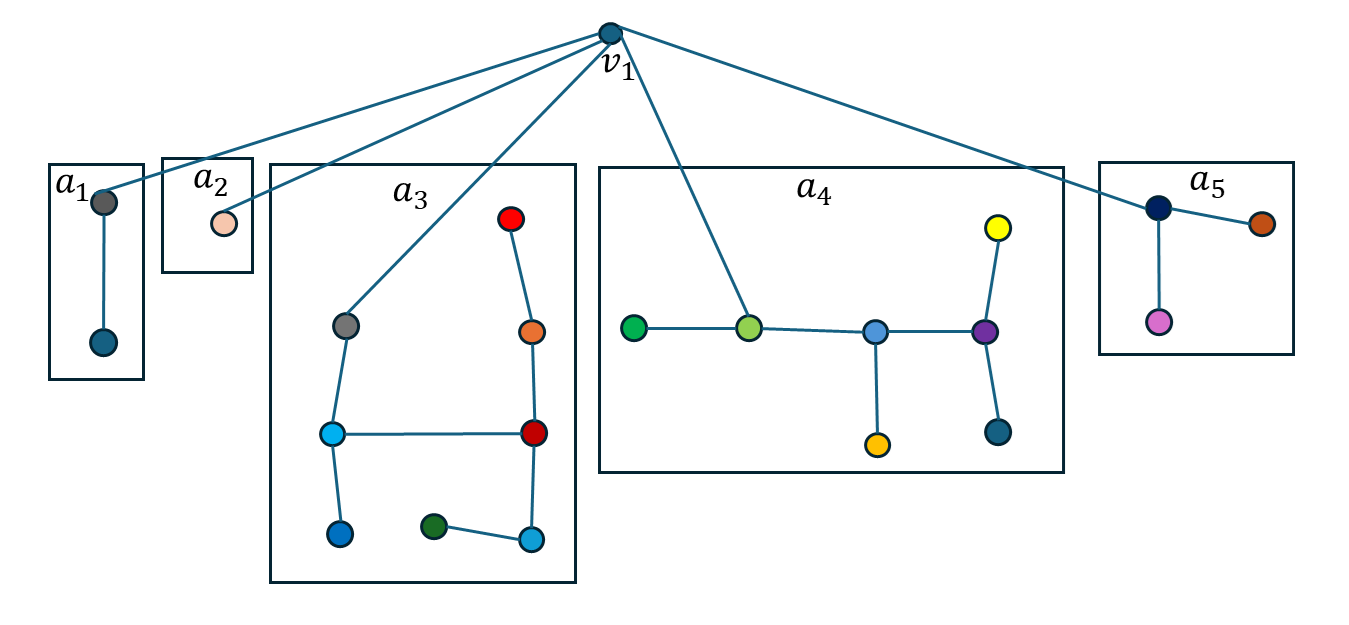}
    \caption{A tree with \( d_1 = k = 5 \) and connected subtrees \( \{a_i\} \).}
    \label{fig:f1}
\end{minipage}
\hfill
\begin{minipage}[t]{0.48\textwidth}
    \centering
    \includegraphics[width=0.95\linewidth]{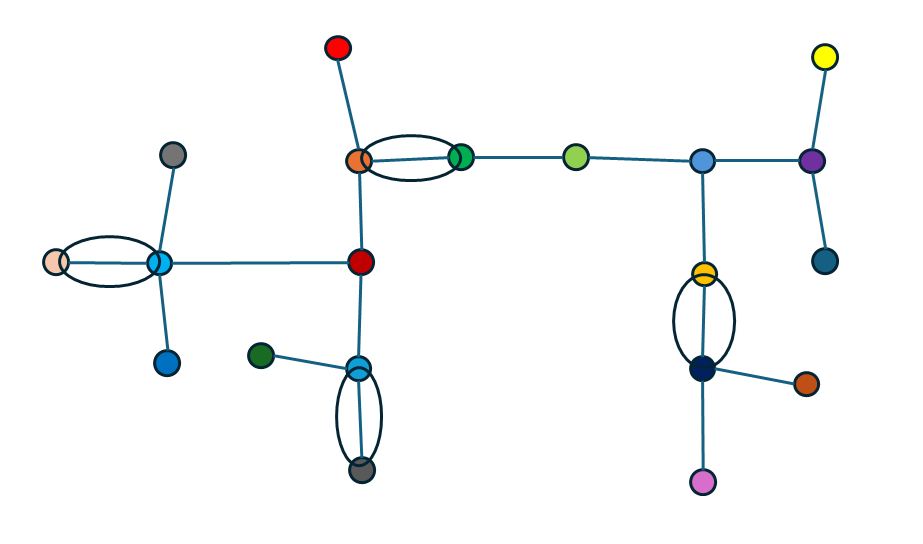}
    \caption{One way of choosing $k-1 = 4$ specified edges (circled) in a tree.}
    \label{fig:f2}
\end{minipage}
\end{figure}
Therefore, the total count is obtained by summing over all configurations of \( a_1 \) to \( a_k \), representing all ways of partitioning the \( n - 1 \) remaining vertices into \( k \) connected components, such that \( \sum_{i=1}^k a_i = n - 1 \). To compute this, we consider each possible multiset of \( k \) positive integers summing to \( n - 1 \), and assign the variables \( a_1, a_2, \ldots, a_k \) to the values in the multiset. A key observation is that the ordering of the labels \( a_1, \ldots, a_k \) does not affect the configuration; e.g., \( a_1 = 1, a_2 = 2 \) and \( a_1 = 2, a_2 = 1 \) represent the same structure. So, since different orderings of the same multiset represent the same structure, we divide by \( k! \) to avoid overcounting these indistinguishable configurations that happens due to index permutations.
\begin{equation}
\label{eq:e17}
\#\text{Trees where} \deg(v_1) \text{ is } k =
\frac{
\sum\limits_{\{a_1, \ldots, a_k\} : \sum a_i = n - 1}
\text{\#trees for fixed multiset } \{a_1, \ldots, a_k\}
}{k!}.
\end{equation}
In the next step, given a fixed multiset \( \{a_1, \ldots, a_k\} \), representing the sizes of the \( k \) subtrees, the remaining \( n - 1 \) vertices must be divided into \( k \) disjoint groups of sizes \( a_1, \ldots, a_k \). This corresponds to choosing subsets of those sizes from the labeled vertex set, as computed below:
	\begin{align}
            \label{eq:divide}
		\#\text{Ways of dividing $n-1$ vertices into $k$ fixed-sized graphs} =  \frac{(n-1)!}{\prod_{i=1}^{k} a_i!}.
	\end{align}
	Furthermore, in each component, one vertex must be connected to vertex \( 1 \). Thus, we select one vertex from each component, which can be done in \( \prod_{i=1}^{k} a_i \) ways. Within each component, it is known that a tree structure exists, and there are \( T_{a_i} \) distinct trees for a component of size \( a_i \). Consequently, for each fixed set of \( a_i \) values satisfying \( \sum_{i=1}^k a_i = n-1 \), the total number of trees is given by:
	\begin{equation}
		\label{eq:e19}
		\#\textit{Trees with fixed }a_i\textit{ size} =\left( \prod_{i=1}^{k} a_i T_{a_i} \right) \frac{(n-1)!}{\prod_{i=1}^{k} a_i!} .
	\end{equation}
This result is obtained by combining the number of ways to partition the vertices, the number of ways to select a connection to vertex \(1\), and the number of distinct subtrees. We choose to simplify by merging Equations \eqref{eq:divide}, \eqref{eq:e17}, and \eqref{eq:e19}, we can derive a formula for Equarion~\ref{eq:e16}:
	\begin{equation}
    \label{eq:eq20}
		T_n = \sum_{k=1}^{n-1} \frac{\displaystyle \sum_{\{a_1,\ldots,a_k\}: \sum a_i = n-1}  \left( \prod_{i=1}^{k} a_i T_{a_i} \right) \frac{(n-1)!}{\prod_{i=1}^{k} a_i!} }{k!}.
	\end{equation}
Below, we demonstrate that expressing \( T_n \) in terms of the degree of vertex~1 leads to a remarkably simple identity.
\begin{lemma}
\label{lem:a1}
For all integers \( n \geq 2 \) and for all integers \( k \) such that \( 1 \leq k \leq n-1 \), the following identity holds:
    \begin{equation}
    \frac{\displaystyle \sum_{\{a_1,\ldots,a_k\}: \sum a_i = n-1}  \left( \prod_{i=1}^{k} a_i T_{a_i} \right) \frac{(n-1)!}{\prod_{i=1}^{k} a_i!} }{k!} = \frac{T_{n-1}}{(n-1)^{k-2}} \binom{n-2}{k-1}.
    \end{equation}
\end{lemma}
	\begin{proof}
    To prove this, we multiply both sides by \( (n-1)^{k-2} \) and set \( n-1 = m \) for clarity. It suffices to show that the right hand side \textit{(RHS)} equals the left hand side \textit{(LHS)}.
		\begin{equation}
        \label{eq:premoreda}
			\text{LHS} = \frac{\displaystyle \sum_{\{a_1,\ldots,a_k\}: \sum a_i = n-1}  \left( \prod_{i=1}^{k} a_i T_{a_i} \right) \frac{m!}{\prod_{i=1}^{k} a_i!} }{k!} m^{k-2} = T_{m} \binom{m-1}{k-1}  = \text{RHS}.
		\end{equation}
To prove the above statement, we use a double-counting argument. On the right-hand side, \( T_m \) counts all labeled trees on \( m \) vertices. Each such tree has exactly \( m - 1 \) edges, and removing \( k - 1 \) of them splits the tree into \( k \) connected components. There are \( \binom{m - 1}{k - 1} \) ways to choose which \( k - 1 \) edges to remove, and an example of such a selection is illustrated in Figure~\ref{fig:f2}. Thus, the right-hand side counts labeled trees on \( m \) vertices together with a choice of \( k - 1 \) specified edges whose removal results in a forest of \( k \) components. It remains to show that the left-hand side counts the same set of structures, which we do by analyzing the count from a component-based perspective.
		\begin{align}
    \text{LHS} = \frac{1}{k!} \left( \sum_{\sum_{i=1}^{k} a_i = m} L_1 L_2 L_3 \right),
    \, L_1 = \frac{m!}{\prod_{i=1}^{k} a_i!}, \,
    L_2 = \prod_{i=1}^{k} T_{a_i},\,
    L_3 = m^{k-2} \prod_{i=1}^{k} a_i. \label{eq:main_lhs}
\end{align}
		First, let us examine \( L_1 \). $L_1$ represents the number of ways to partition \( m \) distinct vertices into \( k \) groups, each containing \( a_i \) vertices, similar to Equation~\eqref{eq:divide}. Next, we consider the meaning of \( L_2 \). Within each group, we construct a connected tree. Given that each group contains \( a_1, a_2, \dots, a_k \) vertices, the number of possible trees in a group of size \( a_i \) is \( T_{a_i} \). Therefore, \( L_2 \) represents the total number of ways to form trees within each of the groups.

Finally, we analyze \( L_3 \). Note that for any positive integer \( r \) and non-negative integers \( c_1, c_2, \ldots, c_k \) such that \( c_1 + c_2 + \cdots + c_k = r \), the multinomial identity states that $(a_1 + a_2 + \cdots + a_k)^r = \sum_{c_1 + \cdots + c_k = r} \frac{r!}{c_1! \cdots c_k!} \prod_{i=1}^k a_i^{c_i}$. Using that \( \sum a_i = m \), we can write \( m^{k-2} = \left( \sum_{i=1}^{k} a_i \right)^{k-2} \) and expanding out $L_3$ using multinomial identity we get: 
\begin{equation}
\label{eq:L_3}
\begin{split}
L_3 = m^{k-2} \prod_{i=1}^k a_i 
= \left( \sum_{i=1}^k a_i \right)^{k-2} \prod_{i=1}^k a_i 
\overset{\text{(a)}}{=}
\sum_{\substack{c_1 + \cdots + c_k = k - 2}} 
\frac{(k - 2)!}{\prod_{i=1}^k c_i!} \prod_{i=1}^k a_i^{c_i + 1}.
\end{split}
\end{equation}
We applied the multinomial identity of Equation() in step (a) of Equation~\ref{eq:L_3}.

Observe that in the multinomial expansion, each \( a_i \) appears \( c_i \geq 0 \) times, and the extra factor \( \prod_{i=1}^k a_i \) results in each \( a_i \) being raised to the power \( c_i + 1 \). Letting \( d_i = c_i + 1 \), we then have \( \sum_{i=1}^k d_i = 2k - 2 \). This transforms the exponent expression into \( \prod_{i=1}^k a_i^{d_i} \), and the multinomial coefficient becomes \( \frac{(k - 2)!}{\prod_{i=1}^k (d_i - 1)!} \). According to Theorem~\ref{th:d1}, this quantity counts the number of labeled trees on \( k \) vertices with degree sequence \( (d_1, \dots, d_k) \).

Now, interpret the $k$ groups of size $a_i$ (formed in $L_1$) as distinct components, where each component is treated as a "super vertex." These $k$ super vertices are then connected to form a single tree. The number of such trees, where the degree of each super vertex is $d_i$, is given by $\frac{(k-2)!}{\prod_{i=1}^k c_i!}$, as in Theorem~\ref{th:d1}. In this interpretation, $d_i$ represents the number of edges leaving component $i$ to connect it to the rest of the tree.


For each component of size \(a_i\), there are \(a_i^{d_i}\) ways to assign its \(d_i\) outgoing edges to its vertices, since each edge can start from any of the \(a_i\) nodes. Therefore, the total number of edge assignment configurations across all \(k\) components is \(\prod_{i=1}^k a_i^{d_i}\). The number of labeled trees with degree sequence \((d_1, \ldots, d_k)\) is \((k - 2)! / \prod_{i=1}^k (d_i - 1)!\), and summing over all valid degree sequences with \(\sum d_i = 2k - 2\), we obtain \(L_3 = \sum_{\{d_1, \ldots, d_k\} : \sum d_i = 2k - 2} \frac{(k - 2)!}{\prod_{i=1}^k (d_i - 1)!} \prod_{i=1}^k a_i^{d_i}\). This expression counts the total number of ways to connect \(k\) components into a single tree, accounting for all valid ways of assigning edge directions from each component.	

        Thus, \(L_1 \times L_2 \times L_3\) represents the total number of ways to (1) partition the \(n - 1\) vertices into \(k\) groups of sizes \(a_1, \ldots, a_k\), (2) form a labeled tree within each group, and (3) connect the groups using \(k - 1\) additional edges. Each resulting configuration corresponds to a tree where the \(k\) groups behave like "super vertices" connected by the specified edges. This is equivalent to specifying \(k - 1\) edges in a tree, since one can remove these edges to recover the original \(k\)-component structure. Figure~\ref{fig:f3} illustrates the equivalence to the earlier construction in Figure~\ref{fig:f2}.
Finally, each such tree is counted \(k!\) times due to the possible permutations of the \(k\) component labels. To correct for this overcounting, we divide by \(k!\), and the total count becomes \(\sum L_1 \times L_2 \times L_3 / k! = RHS\).
	\begin{figure}[htbp]
    \centering
    \includegraphics[width=0.4\textwidth]{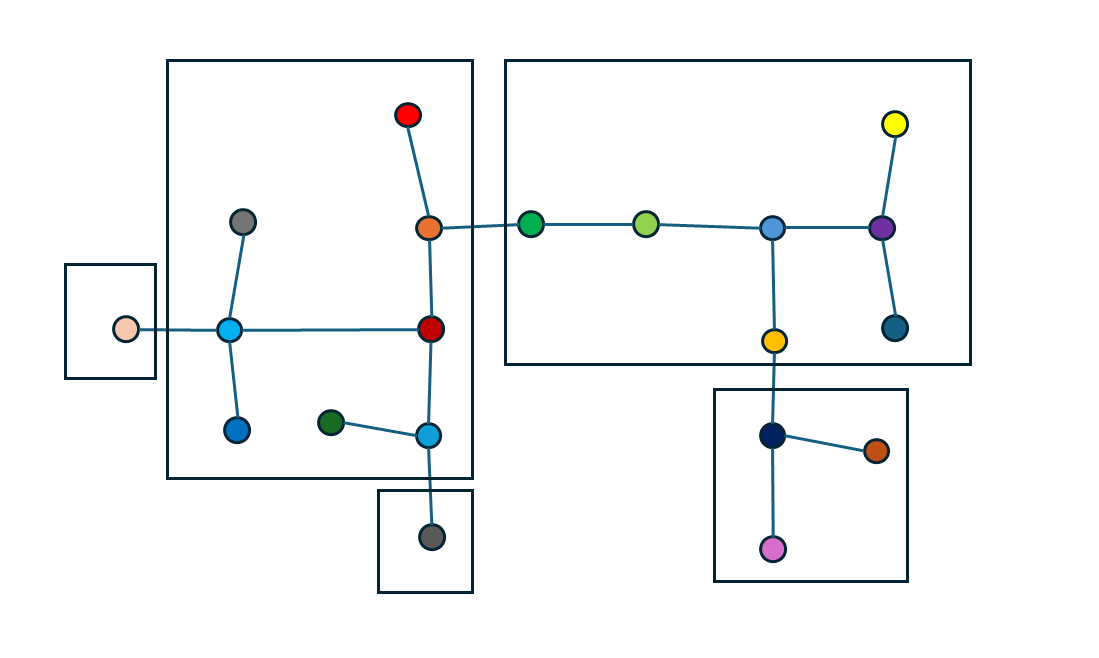}
    \caption{\(k=5\) tree components of Figure~\ref{fig:f2}.}
    \label{fig:f3}
\end{figure}
	\end{proof}
    Back to proving Theorem~\ref{th:ca1}, we rewrite $T_n$,
\begin{align}
    T_n = \sum_{k=1}^{n-1} \frac{T_{n-1}}{(n-1)^{k-2}} \binom{n-2}{k-1} 
    \quad \text{(from Lemma~\ref{lem:a1} and Equation~\ref{eq:eq20})}.
    \label{eq:rewrite_Tn}
\end{align}
    \begin{align}
    T_n = \sum_{k=1}^{n-1} (n-1)^{n-1-k} \binom{n-2}{k-1} \quad \text{(from induction hypothesis)}.
    \label{eq:induction}
\end{align}

Substituting \( j = k - 1 \) into Equation~\ref{eq:induction}, simplifies the summation: \( T_n = \sum_{j=0}^{n-2} (n - 1)^{n - 2 - j} \binom{n - 2}{j} \). Now we apply Newton’s Binomial Theorem \((x + y)^r = \sum_{j = 0}^r \binom{r}{j} x^{r - j} y^j\), by setting \(r = n - 2\), \(x = n - 1\), and \(y = 1\), yielding the identity
\[
\sum_{j=0}^{n-2} \binom{n-2}{j} (n-1)^{n-2-j} = ((n-1)+1)^{n-2}.
\]
Thus, we conclude $T_n = n^{n-2}$.
\end{proof}
\section{Conclusion} This proof verifies Cayley’s Theorem by combining induction and degree-based decomposition. By partitioning trees using vertex degrees and reassembling them as super-nodes, we provide a new combinatorial explanation that is both intuitive and rigorous, offering insight beyond classical approaches like Prüfer sequences or matrix methods.
\\

\vfill\eject

\end{document}